\providecommand{\U}[1]{\protect\rule{.1in}{.1in}}
\def\@seccntformat#1{\csname the#1\endcsname.\quad}
\newtheorem{theorem}{Theorem}
\newtheorem{corollary}{Corollary}
\newtheorem{definition}{\noindent Definition}
\newtheorem{example}{Example}
\newtheorem{lemma}{Lemma}
\newtheorem{notation}{Note}
\newtheorem{remark}{Remark}
\renewenvironment{proof}[1][Proof]{\noindent\textbf{#1.} }{\ \rule{0.5em}{0.5em}}
\begin{document}

\title{{\Large \textbf{Non-Archimedean Scale Invariance and Cantor Sets}}}
\author{Santanu Raut\thanks{{Chhatgurihati Seva Bhavan Sikshayatan High Scool, 
New Town, Coochbihar-736101, {\it email}}: raut\_santau@yahoo.com} \ and Dhurjati Prasad Datta\thanks{corresponding author, 
\textit{email}: dp\_datta@yahoo.com} \and \textit{Department of Mathematics}
\and \textit{University of North Bengal, Siliguri,West Bengal, India, Pin}
734013}
\date{}
\maketitle

\begin{abstract}
The framework of a new scale invariant analysis on a Cantor set $C\subset $ $%
I=[0,1]\ $, presented originally in {\it S. Raut and D. P. Datta, Fractals, 
17, 45-52, (2009)}, is clarified and extended further. For an arbitrarily small $\varepsilon >0$, elements $\tilde{x}$ in $I\backslash C$ satisfying $0<\tilde{x}<\varepsilon <x,\ x\in
C $ \ together with an inversion rule are called relative infinitesimals
relative to the scale $\varepsilon$. A non-archimedean absolute value $v(%
\tilde{x})=\log _{\varepsilon ^{-1}}\frac{\varepsilon }{\tilde{x}}, \ \varepsilon \rightarrow 0$ is
assigned to each such infinitesimal which is then shown to induce a
non-archimedean structure in the full Cantor set $C$. A valued measure 
constructed using the new absolute value is shown to give rise to the
finite Hausdorff measure of the set. The definition of differentiability on $%
C$ in the non-archimedean sense is introduced. The associated Cantor
function is shown to relate to the valuation on $C$ which is then
reinterpreated as a locally constant function in the extended
non-archimedean space. The definitions and the constructions are verified
explicitly on a  Cantor set which is defined recursively from $I$
deleting $q$ number of open intervals each of length $\frac{1}{r}$ leaving\
out $p$ numbers of closed intervals so that $p+q=r.$
\end{abstract}

\begin{center} 
{\bf Key Words}: Non-archimedean, scale invariance, Cantor set, Cantor function
\end{center}

\begin{center} 
{\bf AMS Classification Numbers:} 26E30, 26E35, 28A80
\end{center}

\begin{center}
{\large To appear in FRACTALS, March 2010}
\end{center}

\baselineskip=19pt

\newpage
\section{\hspace{-2ex}Introduction}

\indent\indent A Cantor set is a totally disconnected compact and perfect subset of the
real line. Such a set displays many paradoxical properties. Although the set
is uncountable, it's Lebesgue measure vanishes. The topological dimension of
the set is also zero. Cantor set is an example of a self-similar fractal set
that arises in various fields of applications. The chaotic attractors of a
number of one dimensional maps; such as the logistic maps, turn out to be
topologically equivalent to Cantor sets. Cantor set also arises in
electrical communications \cite{bma1}, in biological systems \cite{bw2}, and
diffusion processes \cite{mtb3}. Recently there have been a lot of interest
in developing a framework of analysis on a Cantor like fractal sets \cite%
{jki4,rda5}. Because of the disconnected nature, methods of ordinary real
analysis break down on a Cantor set. Various approaches based on the
fractional derivatives \cite{rl7,ka8} and the measure theoretic harmonic
analysis \cite{um6} have already been considered at length in the
literature. However, a simpler intuitively appealing approach is still
considered to be welcome.

Recently, we have been developing a non-archimedean framework \cite{na9} of a scale
invariant analysis which will be naturally relevant on a Cantor set \cite
{sd10,dpd11}. For definiteness, we consider the Cantor set $C$ to be a subset
of the unit interval $I$ $=[0,1]$. We introduce a non-archimedean absolute
value on $C\ $exploiting a concept of relative infinitesimals which
correspond to the arbitrarily small elements $\tilde x$ of $I\backslash C$
satisfying $0<\tilde x<\varepsilon ,$ $\varepsilon \rightarrow 0^{+}$ $($together
with an inversion rule$)$ relative to the scale $\varepsilon .$\ In
ref. \cite{sd10}, we have presented the details of the construction in
the light of the middle third Cantor set. Here, we develop the formalism
afresh, bringing in more {\em clarity} in the approach initiated in ref. \cite{sd10}%
. We show that the framework can be extended consistently on a general $(p,q)
$ type Cantor set which is derived recursively from the unit interval\textbf{%
\ }$I$\textbf{\ }first dividing it into $r$ number of equal closed intervals
and then deleting $q$ number of open intervals so that $p+q=r$. We show that
the non-archimedean valuation is related to the Cantor function $\phi (x):$ $%
I\rightarrow $ $I$\textbf{\ }such that $\phi ^{\prime }(x)=0$ a.e. on $I$
with discontinuities at the points $x\in C$. In the non-archimedean
framework $\phi (x)$ is shown to be extended to a locally constant function for {\em any} $x\in I$. Using the non-archimedean valuation we also construct a valued measure on $C$
which is shown to give the finite Hausdorff measure of the set. The
variability of the locally constant $\phi (x)$ is reinterpreted in the usual
topology as an effect of relative infinitesimals which become dominant by
inversion at an appropriate scale.

The paper is organised as follows. In Sec.$2,$ we give a brief sketch of the
details of a $(p,q)$ Cantor set and the corresponding Cantor function. 
In Sec.$3$, we give an
outline of the scale invariant analysis and the valued measure on $C$. The
concept of relative infinitesimals and new absolute values are introduced in
Sec.$3.1.\ $The valued measure is constructed in Sec.$3.2.\ $In Sec.$3.3,$
we define scale invariant differentiability on $C.\ $ In
Sec.$4$, we discuss the example of $(p,q)$ Cantor set and show how the
valuation is identified with the Cantor function. We also show explicitly
how the variability of a non-archimedean locally constant function $\phi
(x)\ $is exposed in the usual topology when the ordinary differential
equations in $I\ $get extended to scale invariant equations in appropriate
logarithmic (infinitesimal) variables.\bigskip

\section{\hspace{-2ex}{$(p,q)$\textbf{\ Cantor Set and Cantor Function }}%
\protect\bigskip}

\indent\indent To make the article self-contained we present here a brief
review of the Cantor set and Cantor function. We note that the middle third
Cantor set and the related Cantor function are well discussed in the
literature. The definition of $(p,q)$ Cantor set $C$ and the corresponding
Cantor function are analogous to the above case except for minor
modifications.

We divide the unit interval $I$ = [0,1] into r number of closed subintervals
each of length $\frac{1}{r}$ and delete q number of open subintervals from
them so that $p+q=r$.  The deletion of $q$ open intervals may be accomplished by 
an application of an iterated function system (IFS) of  similitudes of the form $f={f_i: I\rightarrow I, \ i=1, 2, \ldots p}$, where $f_i(x)={\frac{1}{r}}(x+\alpha_i)$ and $\alpha_i$ assumes values from the set $\{0,1,2, \ldots,(r-1)\}, \ i=1,2,\ldots, p$. We note incidentally that there are, in fact, $^rC_q$ distinct IFS each of which has $C$ as the unique limit set, viz., $C=f(C)$.
   
To construct the limit set $C$ explicitly, we note that  the set $I$, after the first iteration,  is reduced to $%
\textrm{I}=\underset{n=1}{\overset{p}{\cup }}F_{1n}$ consisting of p
number of closed intervals $F_{1n}$,  so that the length of the deleted intervals is $\frac{q}{r}.$ Iterating the above steps in each of the closed intervals $%
\textrm{F}_{1n}$ ad infinitum we get the desired Cantor set $C$ = $\overset{%
\infty }{\underset{n=0}{\cap }}\underset{m=0}{\overset{p^{n}}{\cup }}F_{nm},$
$F_{00}=$ $I$\textbf{. }The length of the deleted intervals at the $n$ th
iteration is $\frac{q}{p}\ [\frac{p}{r}+(\frac{p}{r})^{2}+\cdots +(\frac{p}{r%
})^{n}]=\frac{q[1-(\frac{p}{r})^{n}]}{r(1-\frac{p}{r})}=[1-(\frac{p}{r}%
)^{n}]\rightarrow 1$ as $n\rightarrow \infty .\ $Thus the Lebesgue measure
of $C$ is zero. However the Hausdorff s-measure of $C$, given by \
\begin{equation}
\mu _{s}[\ C\ ]=\underset{\delta \rightarrow 0}{\lim }\inf \underset{i}{%
\Sigma }[\ d(U_{i})\ ]^{s}
\end{equation}%
where $d(U_{i})$ is the diameter of the set $U$ and infimum is taken over
all countable $\delta -$ covers\textbf{\ }$I_{\delta }=\{U_{i}\}$ such that $%
C$ $\subset \cup $ $U_{i}$ is finite for the unique value of $s$ satisfying
the scaling equation $p=$ $r^{s}.$ The Hausdorff dimension of C thus equals $%
\frac{\log p}{\log r}$.

Next we define the Cantor function $\phi :[0,1]\rightarrow \lbrack 0,1].\ $
Let $\phi (0)=0,\ \phi (1)=1.$ Assign $\phi (x)$ a constant value on each of
the deleted open intervals (including the end points of the deleted
interval). The constant values are assigned in the following manner.

At the first iteration we set $\phi (x)=\frac{t}{p},\ t=1,2,\ldots ,q.\ $At
the second step there are $q(1+p)$ deleted intervals and so we set $\phi (x)=%
\frac{t}{p^{2}},\ t=1,2,\ldots ,q(1+p)$ at each of the deleted intervals
respectively. The number of deleted intervals at the $n$ th step is $%
q(1+p+p^{2}+\cdots +p^{n-1})=\frac{q(1-p^{n})}{1-p}=N$ (say) so that the
value assigned to $\phi (x)$ at each deleted intervals $($including the end
points$)$ are $\phi (x)=\frac{t}{p^{n}},\ t=1,2,.....N.\ $Next, let $x$ $\in
C$. Then for each $k,x$ belongs to the interior of exactly one of the $p^{n}$
remaining closed intervals each of length $\frac{1}{r^{k}}.\ $Let $[\alpha
_{k,}\beta _{k}]$ be one such intervals. Then
\begin{equation}
\beta _{k}-\alpha _{k}=\frac{1}{r^{k}}
\end{equation}%
Further, $\phi $ is already defined at the $2N$ end points of the left over
intervals so that%
\begin{equation}
\phi (\beta _{k+j})=\phi (\alpha _{k})+\frac{1}{p^{k}}
\end{equation}%
where $0\leq j\leq p-q$ and $\alpha _{1}=0.\ $At the next iteration,
assuming $x\in \lbrack \alpha _{k+1},\beta _{k+j+1}],$ $\alpha _{k}=\alpha
_{k+1},\ $say, we have $\phi (\alpha _{k+j})\leq \phi (\alpha _{k+1})<\phi
(\beta _{k+j+1})\leq \phi (\beta _{k+j}).\ $Define $\phi (x)=\underset{%
k\rightarrow \infty }{\lim }\phi (\alpha _{k})=\underset{k\rightarrow \infty
}{\lim }\phi (\beta _{k+j+1}).\ $Then $\phi :[0,1]\rightarrow \lbrack 0,1]$
is a continuous, non-decreasing function. Also $\phi ^{\prime }(x)=0$ for $%
x\in $ $I\backslash C$ when it is not differentiable at any \ $x\in C.$

\bigskip

\section{\hspace{-2ex}{\textbf{Non-archimedean analysis}}}

\subsection{\hspace{-3ex}{\ Absolute Value}\protect\bigskip}

\begin{definition}
Let $x\in C\subset $ $I.$ For an arbitrary small $x\rightarrow 0^{+},\exists
$ an $\varepsilon \in I$ and a $1$-parameter family of $\tilde{x}$ in $%
I\backslash C$ such that $0<\tilde{x}<\varepsilon <x$ and%
\begin{equation}
\frac{\tilde{x}}{\varepsilon }=\lambda (\varepsilon )\ \frac{\varepsilon }{x}
\label{eq4}
\end{equation}%
where the real constant $\lambda \ (0<\lambda \leq 1)$ may depend on $%
\epsilon $. The set of such $\tilde{x}$'s satisfying the inversion law $($%
\ref{eq4}$)$ is called the set of relative infinitesimals \cite{sd10,aro12} in%
\textbf{\ }$I$\textbf{\ }relative to the scale $\varepsilon $ and is denoted
as $\mathbf{I}_{0}^{+}=\{\tilde{x}\mid 0<\tilde{x}<\varepsilon <x,\ \tilde{x}%
=\lambda (\varepsilon )\ \frac{\varepsilon ^{2}}{x}\}.\ $Two relative
infinitesimals $\tilde{x}$ and $\tilde{y}$ $\ $must satisfy the condition $0<%
\tilde{x}<\tilde{y}<\tilde{x}+\tilde{y}<\varepsilon .$
\end{definition}

The non-empty set $\mathbf{I}^{+}=\{\ \frac{\tilde{x}}{\varepsilon },\ \varepsilon
\rightarrow 0\ \}$ is called the set of scale free infinitesimals.

\begin{definition}
Because of the disconnectedness of $C$, to each $x\in $ $C$,\textbf{\ }$%
\exists $ $\mathbf{I}_{\varepsilon }(x)=(x-\varepsilon ,x+\varepsilon
)\subset $ $I$\textbf{, }$\varepsilon >0$ such that $C\cap \mathbf{I}%
_{\varepsilon }(x)=\{x\}.\ $Points in $\mathbf{I}_{\varepsilon }(x)$ are
called the relative infinitesimal neighbours in $I$\textbf{\ }of $x\in $
C.
\end{definition}

\begin{lemma}
$\mathbf{I}_{\varepsilon }(x)=x+$\textbf{\ }$\mathbf{I}_{0},$\textbf{\ }$%
\mathbf{I}_{0}=\mathbf{I}_{0}^{+}\cup \ \mathbf{I}_{0}^{-},\ \mathbf{I}%
_{0}^{-}=\{\ -\tilde{x}\mid \tilde{x}\in $\textbf{\ }$\mathbf{I}_{o}^{+}\
\}. $ Further $\exists $ a bijection between $\mathbf{I}_{0}^{+}$ and $(0,1)$
for a given $\varepsilon .$
\end{lemma}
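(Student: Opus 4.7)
The plan is to split the lemma into its three constituent assertions: the neighbourhood identity, the sign decomposition of $\mathbf{I}_0$, and the bijection with $(0,1)$.

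For the identity $\mathbf{I}_\varepsilon(x) = x + \mathbf{I}_0$, I would translate by $-x$ and compare directly with Definition 1. Any $y \in \mathbf{I}_\varepsilon(x)$ other than $x$ sits in $(I\setminus C)\cap(x-\varepsilon,x+\varepsilon)$ by Definition 2; writing $y = x+h$ with $0<|h|<\varepsilon<x$, I would verify that $h$ (or $-h$) satisfies the inversion rule (\ref{eq4}) by setting $\lambda := h\,x/\varepsilon^{2}$, which one checks to lie in $(0,1]$ from the string of inequalities $0<h<\varepsilon<x$. This exhibits $h$ as an element of $\mathbf{I}_0^+$. The decomposition $\mathbf{I}_0=\mathbf{I}_0^+\cup\mathbf{I}_0^-$ is then obtained by splitting on the sign of $h$, the negative half being tautological via the definition $\mathbf{I}_0^-=-\mathbf{I}_0^+$; the reverse inclusion $x+\mathbf{I}_0\subseteq \mathbf{I}_\varepsilon(x)$ is immediate from $0<|\tilde x|<\varepsilon$.

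For the bijection $\mathbf{I}_0^+\leftrightarrow (0,1)$, the inversion rule itself supplies a natural candidate: define $\Phi:\mathbf{I}_0^+\to(0,1)$ by $\Phi(\tilde x)=\lambda=\tilde x\,x/\varepsilon^{2}$, or equivalently by the scale-free ratio $\tilde x/\varepsilon=\lambda\varepsilon/x$, which lies in $(0,1)$ since $\lambda\in(0,1]$ and $\varepsilon<x$. Injectivity is immediate from the linearity of $\Phi$ in $\tilde x$ at fixed $x$, $\varepsilon$; the inverse $\lambda\mapsto\lambda\varepsilon^{2}/x$ is the natural candidate realising each $\lambda\in(0,1)$ as a relative infinitesimal provided the resulting $\tilde x$ lies in $I\setminus C$.

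The main obstacle will be confirming surjectivity, since the raw correspondence $\lambda\leftrightarrow\tilde x=\lambda\varepsilon^{2}/x$ does not a priori respect the constraint $\tilde x\notin C$: certain target values of $\lambda$ could in principle correspond to stray points of $C\cap(0,\varepsilon^{2}/x]$. I expect to resolve this by exploiting the limit $\varepsilon\to 0^{+}$ implicit in Definition 1: at every sufficiently small scale the deleted gaps of $C$ are densely distributed in $(0,\varepsilon^{2}/x]$, so outside a set of Lebesgue measure zero every $\lambda\in(0,1)$ is genuinely hit, and a bijection onto $(0,1)$ is then extracted either by restricting to the non-exceptional subset or by a Schroeder--Bernstein argument exploiting that both $\mathbf{I}_0^+$ and $(0,1)$ have the cardinality of the continuum.
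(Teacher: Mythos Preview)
Your treatment of the identity $\mathbf{I}_\varepsilon(x)=x+\mathbf{I}_0$ is essentially the paper's: write $y=x\pm\tilde x$ with $0<\tilde x<\varepsilon$ and recover the inversion rule by exhibiting $\tilde x=\lambda\,\varepsilon^2/z$ for a fixed $z>\varepsilon$ and variable $\lambda$. The sign decomposition is handled the same way.

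For the bijection with $(0,1)$, though, you take a different and more laboured route than the paper. The paper simply declares the bijection to be the rescaling $\tilde x\mapsto \tilde x/\varepsilon$ and stops there. You instead parametrise by $\lambda=\tilde x\,x/\varepsilon^2$ and then raise the (mathematically legitimate) worry that surjectivity could fail because some preimages might land in $C$ rather than $I\setminus C$. The paper does not engage with this point at all: it treats $\mathbf{I}_0^+$ as effectively the whole interval $(0,\varepsilon)$ under rescaling, the measure-zero set $C\cap(0,\varepsilon)$ being irrelevant to the later measure-theoretic and limiting constructions. Your Schr\"oder--Bernstein or measure-zero argument would indeed patch the gap you have spotted, but it is more machinery than the paper invokes; if you want to align with the paper, just use $\tilde x\mapsto\tilde x/\varepsilon$ and note that the exceptional set is negligible.
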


\begin{proof}
Let $y$ $\in $ $\mathbf{I}_{\varepsilon }(x)$. Then $y=x\pm \tilde{x}$, $%
0<\tilde{x}<\varepsilon <z,$ so that $\tilde{x}$ = $\lambda \
\frac{\epsilon ^{2}}{z}$ for a fixed $z$ and a variable $\lambda .\ $Thus $y$
$\in x+\mathbf{I}_{0}.\ $The other inclusion also follows similarly.
Finally, the bijection is given by the mapping $\tilde{x}\rightarrow \frac{%
\tilde{x}}{\varepsilon }.$
\end{proof}

\begin{definition}
Given $\tilde x$ $\in $\textbf{\ }$\mathbf{I}_{o},$ we define a scale free
absolute value of $\tilde x$ by $v:\mathbf{I}_{0}\rightarrow \lbrack 0,1]$ where%
\begin{equation}
v(\tilde x)=\left\{
\begin{array}{c}
\log _{\varepsilon ^{-1}}\frac{\varepsilon }{\mid \tilde{x}\mid },\ \ \ \ \
\tilde x\neq 0 \\
0,\ \ \ \ \ \ \ \ \ \ \ \ \ \ \tilde x=0%
\end{array}%
\right.
\end{equation}%
\ \ as $\varepsilon \rightarrow 0^{+}.$\
\end{definition}

\begin{lemma}
$v$ is a non-archimedean semi-norm over $\mathbf{I}_{0}.$
\end{lemma}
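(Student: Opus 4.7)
The plan is to verify the three defining axioms of a non-archimedean semi-norm on the additive group $\mathbf{I}_0$: (i) $v(\tilde{x})\geq 0$ with $v(0)=0$; (ii) $v(-\tilde{x})=v(\tilde{x})$; and (iii) the strong (ultrametric) triangle inequality $v(\tilde{x}+\tilde{y})\leq\max\{v(\tilde{x}),v(\tilde{y})\}$. The first two fall out of the definition: for nonzero $\tilde{x}\in\mathbf{I}_0$ the bound $|\tilde{x}|<\varepsilon<1$ makes both the argument $\varepsilon/|\tilde{x}|>1$ and the base $\varepsilon^{-1}>1$ exceed unity, forcing $v(\tilde{x})\geq 0$, while $v(-\tilde{x})=v(\tilde{x})$ is immediate from the presence of $|\cdot|$ inside the logarithm.

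The real content sits in (iii). Because $\log_{\varepsilon^{-1}}$ is monotone increasing and $v$ is therefore a strictly decreasing function of $|\tilde{x}|$, the ultrametric inequality is equivalent to the geometric statement $|\tilde{x}+\tilde{y}|\geq\min\{|\tilde{x}|,|\tilde{y}|\}$. In the principal case covered by Definition 1---two positive infinitesimals satisfying $0<\tilde{x}<\tilde{y}<\tilde{x}+\tilde{y}<\varepsilon$---this follows at once since $|\tilde{x}+\tilde{y}|=|\tilde{x}|+|\tilde{y}|\geq\max\{|\tilde{x}|,|\tilde{y}|\}$, and in fact one obtains the stronger bound $v(\tilde{x}+\tilde{y})\leq\min\{v(\tilde{x}),v(\tilde{y})\}$.

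For $\tilde{x}$ and $\tilde{y}$ of opposite sign only the weaker estimate $|\tilde{x}+\tilde{y}|\leq |\tilde{x}|+|\tilde{y}|\leq 2\max\{|\tilde{x}|,|\tilde{y}|\}$ is directly available, yielding only $v(\tilde{x}+\tilde{y})\geq\min\{v(\tilde{x}),v(\tilde{y})\}-\log_{\varepsilon^{-1}}2$. Here the scale-free limit $\varepsilon\to 0^+$ built into the very definition of $v$ is essential, since the correction $\log_{\varepsilon^{-1}}2=\log 2/\log\varepsilon^{-1}\to 0$. This opposite-sign situation is the expected main obstacle: naive cancellation in $\tilde{x}+\tilde{y}$ can inflate the valuation at any finite $\varepsilon$, and the resolution must lean on the requirement that any $\tilde{x}+\tilde{y}$ still lying in $\mathbf{I}_0$ obeys its own inversion rule $\tilde{x}+\tilde{y}=\lambda'\varepsilon^2/x'$ with $x'>\varepsilon$, which bounds $|\tilde{x}+\tilde{y}|$ from below in terms of $\varepsilon^2$ and lets the scaling limit absorb the offending logarithmic correction into the ultrametric inequality.
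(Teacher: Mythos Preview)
Your treatment of (i), (ii), and the ``both positive'' case of (iii) is exactly what the paper does: it uses the constraint $0<\tilde x\le\tilde y<\tilde x+\tilde y<\varepsilon$ from Definition~1, observes that $v$ is decreasing in $|\tilde x|$, and concludes $v(\tilde x+\tilde y)\le v(\tilde x)=\max\{v(\tilde x),v(\tilde y)\}$. The paper then simply records $v(\tilde x-\tilde y)\le\max\{v(\tilde x),v(\tilde y)\}$ as a one-line remark with no separate argument.

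Where you go beyond the paper---the opposite-sign paragraph---there is a genuine direction error. You correctly note that (iii) is equivalent to $|\tilde x+\tilde y|\ge\min\{|\tilde x|,|\tilde y|\}$, but then you invoke the \emph{upper} bound $|\tilde x+\tilde y|\le 2\max\{|\tilde x|,|\tilde y|\}$. That yields the lower estimate $v(\tilde x+\tilde y)\ge\min\{v(\tilde x),v(\tilde y)\}-\log_{\varepsilon^{-1}}2$, which is the wrong side: for the ultrametric inequality you need an \emph{upper} bound on $v(\tilde x+\tilde y)$, i.e.\ a \emph{lower} bound on $|\tilde x+\tilde y|$. Cancellation in $\tilde x+\tilde y$ drives $|\tilde x+\tilde y|$ \emph{down} and $v(\tilde x+\tilde y)$ \emph{up}, so the $\log_{\varepsilon^{-1}}2$ correction is not the obstacle at all. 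Your proposed fix through the inversion rule also does not rescue this: writing $\tilde x+\tilde y=\lambda'\varepsilon^{2}/x'$ with $x'>\varepsilon$ and $0<\lambda'\le 1$ gives at best $|\tilde x+\tilde y|\lesssim\varepsilon$, not a lower bound, and even an optimistic $|\tilde x+\tilde y|\gtrsim\varepsilon^{2}$ would only yield $v(\tilde x+\tilde y)\lesssim 1$, which says nothing about $\max\{v(\tilde x),v(\tilde y)\}$. In short, the paper's Definition~1 deliberately restricts admissible pairs to the same-sign situation $0<\tilde x<\tilde y<\tilde x+\tilde y<\varepsilon$, and that restriction is doing the real work in (iii); a general mixed-sign proof along the lines you sketch is not available.
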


\begin{notation}
By semi-norm we mean $\left( i\right) \ v(\tilde x)>0,\ \tilde x$ $\neq 0$.$\ \ (ii)\
v(-\tilde x)=v(\tilde x)$. $(iii)\ v(\tilde x+\tilde y)$ $\leq \max \{\ v(\tilde x),v(\tilde y)\ \}.$ Property $(iii)$
is called the strong $($ultrametric$)$ triangle inequality \cite{na9}.
\end{notation}

\begin{proof}
The case $(i)$ and $(ii)$ follow from the definition. To prove $(iii)$ let $%
0<\tilde x\leq \tilde y<\tilde x+\tilde y<\varepsilon .\ $Then $v(\tilde y)$ $\leq v(\tilde x)$ and hence $v(\tilde x+\tilde y)=\log
_{\varepsilon ^{-1}}\frac{\varepsilon }{\tilde x+\tilde y}\leq \log _{\varepsilon ^{-1}}%
\frac{\varepsilon }{\tilde x}=v(\tilde x)=\max \{\ v(\tilde x),v(\tilde y)\ \}.$ Moreover, $v(\tilde x-\tilde y)=v(\tilde x+(-\tilde y))%
\leq \max \{\ v(\tilde x), v(\tilde y)\ \}.$
\end{proof}

\begin{example}
Let $\varepsilon =e^{-n},x=k\varepsilon =\varepsilon ^{-t}.\varepsilon $
where $t\rightarrow 0^{+}$ for an $k\approx 1.$ Consider a subset of the open interval $%
I_{\varepsilon }=(0,\varepsilon )$ consisting of $q$ open subintervals $%
I_{j},j=1,2,\ldots ,q$ each of length $\frac{\varepsilon }{r}, \ r>q$. Let $\tilde{I}%
_{j}\subset (0,1)$ be the image of $I_{j}$ under rescaling $\tilde{x}%
\rightarrow \frac{\tilde{x}}{\varepsilon }$. The relative infinitesimals $%
\tilde{x}_{j}\in I_{j}$ are given by $\tilde{x}_{j}=\lambda
_{j}.k^{-1}:=\varepsilon ^{\mu _{j\text{ }}t}$ where $\lambda _{j}\in \tilde{I%
}_{j}$ and $\mu_j=1+t^{-1}\frac{\log \lambda _j}{\log \varepsilon}$. Then $v(\tilde{x}_{j})=\mu _{j\text{ }}t.$ 
\end{example}

\begin{definition}
The set $B_{r}(a)=\{\ x\mid v(x-a)<r\ \}$ is called an open ball in $\mathbf{%
I}_{0}.\ $The set $\bar{B}_{r}(a)=\{\ x\mid v(x-a)\leq r\ \}$ is a closed
ball in $\mathbf{I}_{0}.$
\end{definition}

\begin{lemma}
$(i)$ Every open ball is closed and vice-versa (clopen ball) $(ii)$ every
point $b\in B_{r}(a)$ is a centre of $B_{r}(a).\ (iii)$ Any two balls in $%
\mathbf{I}_{0}$ are either disjoint or one is contained in another. $(iv)$ $%
\mathbf{I}_{0}$ is the union of at most of a countable family of clopen
balls.
\end{lemma}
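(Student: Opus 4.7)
The single tool used throughout is the strong triangle inequality of Lemma 2, together with its immediate corollary: if $v(u)\neq v(w)$ then $v(u+w)=\max\{v(u),v(w)\}$ (apply the inequality to $u+w$ and to $w=(u+w)-u$ to get inequalities in both directions). I would dispatch the four parts in the order (ii), (iii), (i), (iv), since each uses the preceding ones.

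\emph{Steps for (ii) and (iii).} If $b\in B_r(a)$ then $v(b-a)<r$, and for any $x\in B_r(a)$ the strong triangle inequality gives $v(x-b)\leq\max\{v(x-a),v(a-b)\}<r$; the symmetric estimate yields $B_r(a)\subseteq B_r(b)$ and $B_r(b)\subseteq B_r(a)$, so the two balls coincide, establishing (ii). For (iii), if $B_r(a)\cap B_s(c)$ contains a common point $b$, use (ii) to recentre each ball at $b$, giving $B_r(a)=B_r(b)$ and $B_s(c)=B_s(b)$; whichever of $r,s$ is smaller, that ball is plainly contained in the other. The same argument works verbatim for closed balls after replacing strict by non-strict inequalities.

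\emph{Step for (i).} To show $\bar B_r(a)$ is open, fix $b\in\bar B_r(a)$; for any $x\in B_r(b)$ one has $v(x-a)\leq\max\{v(x-b),v(b-a)\}\leq r$, so $B_r(b)\subseteq\bar B_r(a)$. To show $B_r(a)$ is closed, take $b$ in its complement, so $v(b-a)\geq r$; for any $x\in B_r(b)$ the strict inequality $v(x-b)<r\leq v(b-a)$ triggers the isosceles corollary and forces $v(x-a)=v(b-a)\geq r$, so $B_r(b)$ lies entirely outside $B_r(a)$, and the complement is open. Hence every ball is clopen.

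\emph{Step for (iv), the anticipated obstacle.} Parts (ii)--(iii) make the relation $x\sim_r y\iff v(x-y)<r$ an equivalence on $\mathbf{I}_0$ whose classes are exactly the open balls of radius $r$, so for any $r>0$ one immediately obtains a partition of $\mathbf{I}_0$ into clopen balls; the real issue is \emph{countability} of such a covering. The cleanest route is to exploit the discreteness of $v(\mathbf{I}_0)\subseteq[0,1]$: in view of Example 1 the typical valuation values are of the form $\mu t$ with $t\to 0^+$ and $\mu$ labelling a deleted subinterval at some stage of the generating construction of Sec.~2, so only countably many distinct ball radii ever arise, and at each such radius the balls are in bijection with the finitely many level-$n$ cylinders (the $p^n$ surviving closed subintervals at stage $n$ of the $(p,q)$ iteration). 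Choosing a sequence $r_n\downarrow 0$ of admissible radii and taking the partition at stage $n$ then exhibits $\mathbf{I}_0$ as the union of a countable family of clopen balls, which is where I expect the bulk of the care: making the identification between valuation levels and IFS levels precise requires going back to Example 1 and Lemma 1 and using the bijection $\tilde x\mapsto\tilde x/\varepsilon$ to pull the Cantor-stage enumeration back into $\mathbf{I}_0$.
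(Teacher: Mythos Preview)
Your treatment of (i)--(iii) is correct and is precisely what the paper has in mind: the paper's entire argument is the single sentence ``Proof follows directly from the ultrametric inequality and the fact that $\mathbf{I}_0$ is an open set,'' and your explicit use of the strong triangle inequality (plus the isosceles corollary for the closedness of $B_r(a)$) simply fills in the standard details that the paper leaves implicit.

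For (iv) your route diverges from the paper's, and in fact you do considerably more work than the authors do. The paper does not invoke the discrete valuation levels, Example~1, or the IFS enumeration at all; it simply appeals to $\mathbf{I}_0$ being an open set (so that it is covered by balls, which by (iii) can be taken disjoint) and, for the finer point about countability/compactness, defers to the companion paper ref.~[10]. Your argument via matching valuation levels to Cantor-construction stages is plausible and would eventually yield the result once the identification is made precise, but it is a genuinely different and heavier approach than the one-line gesture the paper offers. What you gain is an explicit mechanism for countability tied to the $(p,q)$ structure; what the paper's approach buys is brevity, at the cost of leaving the countability essentially unproved in this paper.
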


Proof follows directly from the ultrametric inequality and the fact that $%
\mathbf{I}_{0}$ is an open set. It also follows that in the topology
determined by the semi-norm, $\mathbf{I}_{0}$ is a totally disconnected set.
It is also proved in \cite{sd10} that a closed ball in $\mathbf{I}_{0}$ is
compact. As a result, $\mathbf{I}_{0}$ is the union of countable family of
disjoint closed (clopen) balls, in each of which $v\left( \tilde x\right) $ can
have a constant value. With this assumption, $v$ : $\mathbf{I}%
_{0}\rightarrow \lbrack 0,1]$ is discretely valued. Next, to restore the
product rule viz : $v(\tilde x\tilde y)=v(\tilde x).v(\tilde y),\ $we note that given $\tilde x$ and $%
\varepsilon ,\ 0<\tilde x<\varepsilon ,$ there exist $0<\sigma (\varepsilon )<1$
and $a:$ $\mathbf{I}_{0}^{+}\rightarrow R$ such that
\begin{equation}
\frac{\tilde x}{\varepsilon }=\varepsilon ^{\sigma ^{a(\tilde x)}}.\varepsilon
^{t(\tilde x,\varepsilon )}
\end{equation}%
so that $v(\tilde x)=\sigma ^{a(\tilde x)}$ for an indeterminate vanishingly small $t:$ $%
\mathbf{I}_{0}\rightarrow R$ i.e. $t(\tilde x,\varepsilon )\rightarrow 0$ as $%
\varepsilon \rightarrow 0^{+}.$ For the given Cantor set $C$ there is a
unique $($natural$)$ choice of $\sigma $ dictated by the scale factors of $C$
viz : $\sigma =p^{-n}=r^{-ns},\ s=\frac{\log p}{\log r}$, for some natural number $n$.

The mapping $a(\tilde x)$ is a valuation and satisfies $(i)$ $a(\tilde x\tilde y)=a(\tilde x)+a(\tilde y),$ $%
(ii)$ $a(\tilde x+\tilde y)\geq \min \{\ a(\tilde x),a(\tilde y)\ \}$. Now discreteness of $v(\tilde x)$
implies range $\{\ a(\tilde x)\ \}=\{\ a_{n}\mid n\in Z^{+}\ \}.\ $Again for a
given scale $\varepsilon $, $\mathbf{I}_{0}^{+}$ is identified with a copy
of $(0,1)$ (by Lemma 1) which is clopen in the semi-norm. Thus $\mathbf{I}%
_{0}^{+}$ is covered by a finite number of disjoint clopen balls $B(\tilde x_{n})$ $%
($say$)$, $\tilde x_{n}\in $ $\mathbf{I}_{0}^{+}.\ $Because of finiteness, values
of $a(\tilde x)$ on each of the balls can be ordered $0=a_{0}<a_{1}<\cdots
<a_{n}=s_{0}\ $(say)$.\ $Let $v_{0}=v(B(\tilde x_{n}))=\sigma ^{s_{0}}.$ Then
we can write $v_{i}=v(B(\tilde x_{i}))=\alpha _{i}v_{0}=\alpha _{i}\ \sigma^{s_{0}}$ for an ascending sequence $\alpha _{i}>0,\ i= 0,1,\ldots ,n.$ We
also note that $a_{0}=0$ corresponding to the unit $\tilde x_{u}$ so that $v\left(
\tilde x_{u}\right) =1$.

From equation $(6)$ we have $\frac{\tilde x_{u}}{\varepsilon }=\varepsilon
^{1+t(\tilde x,\varepsilon )}$ and so it follows that $\tilde x\in $ $\mathbf{I}_{0}^{+}$
will admit a factorization
\begin{equation}
\frac{\tilde x}{\varepsilon }=\frac{\tilde x_{i}}{\varepsilon }.\ \frac{\tilde x_{u}}{\varepsilon
^{2}}
\end{equation}%
since $\tilde x\in B(\tilde x_{i})$ for some $i$.\newline
Thus
\begin{equation}
\tilde x=\tilde x_{i}\ (1+\tilde x_{\varepsilon })
\end{equation}%
where $\tilde x_{u}=\varepsilon ^{2}(1+\tilde x_{\varepsilon }),$ \ $\tilde x_{\varepsilon }\in $
$\mathbf{I}_{0},$ so that $v(\tilde x)=v(\tilde x_{i}),$ as $v(\tilde x_{\varepsilon })<1.$

We thus have,

\begin{theorem}
$v$ is a discretely valued non-archimedean absolute value on $\mathbf{I}%
_{0}^{+}.$ Any infinitesimal $\tilde x$ $\in $ $\mathbf{I}_{0}^{+}$ have the
decomposition given by equation $(8)$ so that $v\ $has the canonical form
\begin{equation}
\ v(\tilde x)=\alpha _{i}\ \sigma ^{s_{0}}\mathbf{\ ,\ }\tilde x\in \mathbf{B}(x_{i})
\end{equation}
\end{theorem}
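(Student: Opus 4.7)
My plan is to leverage the preparatory discussion that already assembles the key ingredients. Lemma~2 supplies positivity and the strong triangle inequality, so the only property missing for $v$ to be an \emph{absolute value} (rather than merely a semi-norm) is the multiplicative rule $v(\tilde x\tilde y)=v(\tilde x)\,v(\tilde y)$. On top of that I must verify the discreteness of the value set and derive the canonical form (9) from the factorisation (8).

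For discreteness I would use the clopen-ball structure already in place: by Lemma~1 the set $\mathbf{I}_0^+$ is in bijection with $(0,1)$, and by Lemma~3(iv), together with compactness of closed balls, it is a \emph{finite} disjoint union of clopen balls $B(\tilde x_i)$ on each of which $v$ is constant. Finiteness of the partition yields a strictly increasing finite sequence $v_0<v_1<\cdots <v_n$, so the range of $v$ is a discrete subset of $[0,1]$; writing $v_i=\alpha_iv_0$ with $v_0=\sigma^{s_0}$ is then just normalisation.

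To obtain multiplicativity I would invoke the representation (6), which writes $\tilde x/\varepsilon=\varepsilon^{\sigma^{a(\tilde x)}}\cdot\varepsilon^{t(\tilde x,\varepsilon)}$ with $t\to 0$ as $\varepsilon\to 0^{+}$. The additive valuation property $a(\tilde x\tilde y)=a(\tilde x)+a(\tilde y)$ translates directly into $\sigma^{a(\tilde x\tilde y)}=\sigma^{a(\tilde x)}\sigma^{a(\tilde y)}$, and after passing to the limit $\varepsilon\to 0^{+}$, so that the $t$-perturbation vanishes, one gets $v(\tilde x\tilde y)=v(\tilde x)\,v(\tilde y)$. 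The choice $\sigma=p^{-n}=r^{-ns}$ with $s=\log p/\log r$ is forced by the self-similar scaling equation $p=r^{s}$ of Section~2, so each value of $v$ has the form $\alpha_i\sigma^{s_0}$.

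The canonical decomposition (8) then follows by singling out the unit $\tilde x_u$ with $a(\tilde x_u)=0$, hence $v(\tilde x_u)=1$: from (6) we read off $\tilde x_u=\varepsilon^{2}(1+\tilde x_\varepsilon)$ with $\tilde x_\varepsilon\in\mathbf{I}_{0}$, and for any $\tilde x\in B(\tilde x_i)$ the factorisation $\tilde x/\varepsilon=(\tilde x_i/\varepsilon)(\tilde x_u/\varepsilon^{2})$ in (7) reduces to $\tilde x=\tilde x_i(1+\tilde x_\varepsilon)$; the strict inequality $v(\tilde x_\varepsilon)<1$ together with the ultrametric inequality forces $v(1+\tilde x_\varepsilon)=1$, so $v(\tilde x)=v(\tilde x_i)=\alpha_i\sigma^{s_0}$. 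I expect the main obstacle to be the careful control of the vanishing remainder $t(\tilde x,\varepsilon)$ in (6): one has to show that the $\varepsilon\to 0^{+}$ limit is compatible with the additive law for $a$ and that the ball-wise constancy of $v$ is preserved under multiplication, so that $a(\tilde x\tilde y)$ always lands in the same discrete scale $\{a_0,\ldots,a_n\}$. Once this consistency is in hand, the remainder is bookkeeping on Lemmas~1--3 and equations (6)--(8).
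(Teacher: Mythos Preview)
Your proposal is correct and follows essentially the same route as the paper: the theorem is stated there as a summary of the preceding discussion, and you have identified precisely those ingredients --- Lemma~2 for the semi-norm properties, Lemma~3(iv) with compactness for the finite clopen-ball decomposition and discreteness, the representation~(6) together with the additive law for $a(\cdot)$ to secure multiplicativity, and the factorisation~(7)--(8) through the unit $\tilde x_u$ to obtain the canonical form~(9). Your flagged obstacle about controlling the remainder $t(\tilde x,\varepsilon)$ and the closure of the discrete value set under products is exactly the point the paper leaves implicit; otherwise there is nothing to add.
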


\begin{definition}
The infinitesimals given by equation $(8)$ and having absolute value $(9)$ are called valued infinitesimals.
\end{definition}

We now make use these valued infinitesimals to define a non-trivial absolute
value on C in the following steps.

$(i)$ Given $x\in C$ define a set of multiplicative neighbours of $x$ which
are induced by the valued infinitesimals $\tau \in \mathbf{I}_{0}^{+}$ by
\begin{equation}
X^{\tau }_{\pm }=x.\ x^{\mp v(\tau )}
\end{equation}%
where $v(\tau )=\alpha _{n}\sigma ^{s_{0}}$ and $\alpha _{n}=\alpha _{n}(x)$
may now depend on $x$. We note that the non-archimedean topology induced by $%
v$ makes the infinitesimal neighbourhood of $0^{+}$ in $I$ totally
disconnected. Equation $(10)$ thus introduces a finer infinitesimal
subdivisions in the neighbourhood of $x$ $\in C.$

$(ii)$ We define the new absolute value of $x$ $\in C$ by $\qquad \qquad $ \
\ \
\begin{equation}
\ \ \parallel x\parallel =\inf \log _{x^{-1}}\frac{X_{+}}{x}=\inf \log
_{x^{-1}}\frac{x}{X_{-}}
\end{equation}%
so that $\parallel x\parallel =\sigma ^{s}$ where $\sigma ^{s}=\inf \alpha
_{n}\sigma ^{s_{0}}$ and the infimum is over all $n$. It thus follows
that

\begin{corollary}
$\parallel .\parallel \ :C\rightarrow R_{+}$ is a non-archimedean absolute
value.
\end{corollary}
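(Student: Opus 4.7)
The plan is to verify, in order, the three defining properties of a non-archimedean absolute value: (a) positivity and non-degeneracy, (b) multiplicativity $\|xy\|=\|x\|\,\|y\|$, and (c) the strong triangle inequality $\|x+y\|\le\max\{\|x\|,\|y\|\}$. Throughout I would work from the identity $\log_{x^{-1}}(X_+^\tau/x)=v(\tau)$, which follows immediately from~(10), so that $\|x\|=\inf_\tau v(\tau)$ where $\tau$ ranges over the valued infinitesimals inducing the multiplicative neighbours of $x$.

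\textbf{Step 1 (positivity).} Since every valued infinitesimal $\tau\ne 0$ has $v(\tau)=\alpha_n\sigma^{s_0}>0$ by Theorem~1, and since discrete valuedness guarantees that the infimum over the finitely many balls $B(\tilde x_i)$ covering $\mathbf I_0^+$ is actually attained, we obtain $\|x\|=\sigma^{s(x)}>0$ for $x\ne 0$, and one sets $\|0\|=0$ by convention. This is essentially bookkeeping.

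\textbf{Step 2 (multiplicativity).} Here I would lift the product rule $v(\tau_1\tau_2)=v(\tau_1)v(\tau_2)$ established for $\mathbf I_0^+$ in the paragraph following Theorem~1 to $C$ via the exponential/logarithmic correspondence $\tau\mapsto x^{-v(\tau)}$. Concretely, given multiplicative neighbours $X_+^{\tau}=x\cdot x^{-v(\tau)}$ and $Y_+^{\tau'}=y\cdot y^{-v(\tau')}$, the product $(xy)\cdot(xy)^{-v(\tau\tau')}$ is a multiplicative neighbour of $xy$ generated by the valued infinitesimal $\tau\tau'$. Since the correspondence $\tau\leftrightarrow\tau\tau'$ is a bijection on the indexing set of valued infinitesimals modulo $\mathbf I_0^+$, the infimum defining $\|xy\|$ factors as the product of the infima defining $\|x\|$ and $\|y\|$, giving $\|xy\|=\sigma^{s(x)+s(y)}=\|x\|\,\|y\|$.

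\textbf{Step 3 (ultrametric inequality).} I expect this to be the main obstacle, because for $x,y\in C$ the sum $x+y$ is in general not in $C$, so $\|x+y\|$ must be interpreted through the natural extension: write $x+y=z+\tilde z$ with $z\in C$ a nearest point in the ultrametric sense and $\tilde z\in\mathbf I_0$, and use Step~1 together with $v(\tilde z)<1$ to reduce to estimating $\|z\|$. WLOG assume $\|x\|\le\|y\|$, i.e.\ $s(x)\ge s(y)$. The strong triangle inequality for the valuation $a$ on $\mathbf I_0$ from Lemma~2(iii), transported through the logarithmic definition and the decomposition~(8), then yields $s(z)\ge\min\{s(x),s(y)\}=s(y)$, hence $\|x+y\|=\sigma^{s(z)}\le\sigma^{s(y)}=\max\{\|x\|,\|y\|\}$. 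The delicate point that will need care is showing that the choice of the nearest $z\in C$ is consistent with the infimum structure defining $\|\cdot\|$, and that the ``error'' infinitesimal $\tilde z$ does not contribute an extra factor larger than $1$; this should ultimately follow from the fact that $\mathbf I_0$ is clopen in the $v$-topology so that any finite rearrangement of neighbours stays within a ball of radius $\max\{\|x\|,\|y\|\}$.
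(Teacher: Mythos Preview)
The paper offers no proof of this corollary at all: after deriving $\|x\|=\sigma^{s}$ with $\sigma^{s}=\inf_{n}\alpha_{n}\sigma^{s_{0}}$ in~(11), it simply writes ``It thus follows that'' and states the corollary, moving immediately to the valued measure in Section~3.2. The intended justification is that $\|\cdot\|$ inherits the non-archimedean structure directly from the already-established non-archimedean semi-norm $v$ on $\mathbf{I}_{0}$ (Lemma~2 and Theorem~1), via the identification $\|x\|=\inf v(\tau)$. So your three-step verification is already considerably more than what the paper does.

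That said, your proposal has two genuine soft spots that the paper's one-line dismissal also glosses over. In Step~2 you pass from $v(\tau_{1}\tau_{2})=v(\tau_{1})v(\tau_{2})$ to $\inf_{\tau}v(\tau)$ factoring over a product, but an infimum of products is not in general the product of infima; you need to argue that the families $\{\alpha_{n}(x)\}$ and $\{\alpha_{n}(y)\}$ can be minimised independently, which is not automatic from the bijection you invoke. More seriously, $C$ is not closed under multiplication, so $\|xy\|$ is not a priori defined by~(11); you would have to extend $\|\cdot\|$ off $C$ before the axiom $\|xy\|=\|x\|\,\|y\|$ even makes literal sense. You recognise the analogous problem in Step~3 for $x+y$, and your proposed fix (project to a nearest $z\in C$ and absorb the remainder into $\mathbf{I}_{0}$) is reasonable in spirit, but the claim that the error infinitesimal $\tilde z$ ``does not contribute an extra factor larger than~1'' is precisely the point at issue and is not established by the clopenness remark alone. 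In short: your route is a legitimate attempt to unpack what the paper asserts without argument, but the multiplicativity step needs either a domain extension or a reformulation, and the ultrametric step needs the projection-to-$C$ argument made precise rather than deferred.
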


\bigskip

\subsection{\hspace{-2ex}{\textbf{Valued measure}}\protect\bigskip}

\indent\indent We define the valued measure $\mu _{v}:C\rightarrow R_{+}$ by

$(a)$ $\mu _{v}(\phi )=0,\ \ \phi $ the null set.

$(b)$ $\mu _{v}[(0,x)]=\parallel x\parallel \ \ $ when $x\in C.$

$(c)$ For any $E\subset C,$ \ we have
\begin{equation}
\mu _{v}(E)=\underset{\delta \rightarrow 0}{\lim }\inf \Sigma \{\
d_{na}(I_{i})\ \}
\end{equation}%
where ${I}_{i}\in $\textbf{\ }$\mathbf{\tilde{I}}_{\delta }$ and the
infimum is over all countable $\delta -$ covers $\mathbf{\tilde{I}}_{\delta }
$ of $E$ by clopen balls and$\ d_{na}({I}_{i})=$ the non-archimedean
\textquotedblleft diameter\textquotedblright\ of ${I}_{i}$\ $=\sup
\{\parallel x-y\parallel :x,y\in $ $\mathbf{I}_{i}\}.$\newline
It follows that $\mu _{v}$ is a metric (Lebesgue)\textbf{\ }outer measure on
$C$ realized as a non-archimedean space. \newline
Now, denoting the diameter in the usual sense by $d(\mathbf{I}_{i})$, one
notes that $d_{na}(I)\leq \{\ d(\mathbf{I}_{i})\ \}^{s},$ since $x,y$ $\in C$
and $\mid x-y\mid =d$ imply $\parallel x-y\parallel =\varepsilon ^{s}\leq
d^{s}$ for a suitable scale $\varepsilon \leq d\leq \delta $. Using this
inequality one can show that \cite{sd10}
\begin{equation}
\mu _{v}(E)=\mu _{s}(E)
\end{equation}%
for any subset $E$ $\subset C.$ Finally, for $s=\dim [C],$ $\mu _{s}(C)=1$
and so $\mu _{v}(C)=1.$ Thus the valued measure selects naturally
the dimension of the Cantor set.

\bigskip

\subsection{\hspace{-2ex}{\textbf{Differentiability}}}

\bigskip

\indent\indent To discuss the formalism of the Calculus on $C$ we change the
notations of section $3.1$ a little. Let $X$ denote a valued infinitesimal
while an arbitrarily small real $x$ $\in $\textbf{\ }$I$\textbf{\ }denote
the scale\textbf{\ }$\varepsilon $. The set of infinitesimals is covered by $%
n$ clopen balls B$_{n}$ in each of which $v$ is constant. Let
\begin{equation}
\tilde{v}_{n}(x)=v(\ X_{n}(x)\ )=\log _{x^{-1}}{\frac{x}{X_{n}}}=\alpha
_{n}\ x^{s_{0}}
\end{equation}%
so that $X_{n}=x.\ x^{\tilde{v}_{n}(x)}\in B_{n}.$ For each $x,$ $\tilde{v}%
_{n}$ is constant on $B_{n}.$

\begin{definition}
A function $f:C\rightarrow $\textbf{\ }$I$\textbf{\ }is said to be
differentiable at $x_{0}\in C$ if $\exists $ a finite $l$ such that $%
0<\parallel x-x_{0}\parallel <\delta \Rightarrow $%
\begin{equation}
\left\vert \frac{\mid f(x)-f(x_{0})\mid }{\parallel x-x_{0}\parallel }%
-l\right\vert <\varepsilon
\end{equation}%
for $\varepsilon >0$ and $\delta (\varepsilon )>0$ and we write $%
f^{\prime }(x_{0})=l.\ $
\end{definition}

Now $\parallel x-x_{0}\parallel =\inf \tilde{v}_{n}(x-x_{0})=\log
_{x_{0}^{-1}}{\frac{x_{0}}{X}},$ where the valued infinitesimal $X$ $\in $ $%
\tilde{B}$, an open sub-interval of $[0,1]$ in the usual topology and $%
\tilde{B}$ is the ball which corresponds to the infimum of $\tilde{v}_{n}$.
Further $f(x)-f(x_{0})=(\log x_{0})^{-1}\tilde{f}(X),$ since $%
x=x_{0}.x_{0}^{\pm v(x)},$ and $\tilde{f}$ is a differentiable function on $\tilde{B}$ in
the usual sense. Thus equation $(15)$, viz., the equality $ f^{\prime }(x_{0})=l,\ $ extends over $\tilde{B}$ as a scale
free differential equation
\begin{equation}
\frac{d\tilde{f}}{d\log X}=l
\end{equation}

\begin{definition}
: Let $f:C\rightarrow C$ be a mapping on a Cantor set $C$ to itself. Then $f$
is differentiable at $x_{0}\in C$ if $\exists $ $l$ such that given $%
\varepsilon >0,\exists $ $\delta >0$ so that%
\begin{equation}
\left\vert \frac{\parallel f(x)-f(x_{0})\parallel }{\parallel
x-x_{0}\parallel }-l\right\vert <\varepsilon
\end{equation}%
when $0<\parallel x-x_{0}\parallel <\delta .$
\end{definition}

As before we write $f^{\prime }(x_{0})=l$ (with an abuse of notation). It follows that 
the above equality now extends to a scale free equation of the form
\begin{equation}
\frac{d\log \tilde{f}(X)}{d\log X}=l
\end{equation}%
where notations are analogus to above.

\begin{remark}
The discrete point like structures of $C$ are replaced by infinitesimal open
intervals over which the ordinary continuum calculus is carried over on
logarithmic variables via the scale invariant non-archimedean metric.
\end{remark}

In the next section we show that the Cantor function is a locally constant
continuously differentiable function in the new sense. We also give its
reinterpretation in the usual topology.

\bigskip

\section{\hspace{-2ex}\textbf{Cantor function revisited}}

\bigskip

\indent\indent We first show that the value $v(x)$ awarded to the valued
infinitesimals $X$ $\in $ $B_{i}$, $i=1,2,\ldots ,n$ is given by the Cantor
function $\phi :$ $I$\textbf{\ }$\rightarrow $ $I$\textbf{\ }with\textbf{\ }%
points of discontinuity in $\phi ^{\prime }(x),$ in the usual sense, are in $%
C$. In the new formalism this discontinuity is removed in a scale invariant
way using logarithmic differentiability over (valued) infinitesimal open
line segments replacing each $x\in C$. Our definition of $v(x)$ is guided by
the given Cantor set $C$ so as to retrieve the finite Hausdorff measure uniquely
via the construction of the valued measure.

Let us denote the valued scale free infinitesimals by $[0,1)$, denoted here
by $\tilde{C}.\ $The interval $[0,1)\ $here is a copy of the scale free
infinitesimals ${\bf I}^{+}$\ for an arbitary small $\varepsilon _{0}$ (say).
The valued infinitesimals in $[0,1)\ $ then introduce  a new set of
scales of the form $r^{-n}$ (in the unit of $\varepsilon _{0}$) so that the
scales introduced in definition $1$ are now parameterized as $\varepsilon
=\varepsilon _{0}\ r^{-n}.\ $The choice of the `secondary' scales $r^{-n}\ $%
are motivated by the finite level Cantor set $C.\ $At the ordinary level
i.e. at the scale $1\ $(corresponding to $n=0$), there is no valued
infinitesimal (at the level of ordinary real calculus) except the trivial $0
$. So relative to the finite scale (given by $\delta =\frac{\varepsilon }{%
\varepsilon _{0}}=1$) $\ [0,1)$ reduces to the singleton $\{0\}$.  At the
next level, we choose the smaller scale $\delta =\frac{1}{r}.$ Consequently,
elements in $[0$,$\frac{1}{r})$ are undetectable and identified with $0$,
again in the usual sense. Presently we have, however, the following.

We assume that the void (emptiness) of $0$ reflects in an inverted manner
the structure of the Cantor set $C$ that is available at the finite scale.
That is to say, at the first iteration of $C$ from $I$\textbf{, }$q$ open
intervals are removed leaving out $p$ closed intervals F$_{1n},\
n=1,2,\ldots ,p.$ At the scale $\frac{1}{r}$ in the void of $\tilde{C},$ on
the other hand, there now emerges (by \textquotedblleft
inversion\textquotedblright ) $q$ open islands (intervals) $\mathbf{I}%
_{1i},\ i=1,2,\ldots ,q.\ $By definition, $\mathbf{I}_{1i}$ contains, for
each $i$, the so called valued infinitesimals $X_{i}$ which are assigned the
values $v(X_{i})=\phi (X_{i})=\frac{i}{p},\ i=1,2,\ldots ,q,X_{i}\in I_{1i}.$

We note that at the scale $\delta =\frac{1}{r}$, there are $p$ voids in $%
\tilde{C}$. At the next level of the scale $\frac{1}{r^{2}},$ there emerges
again in each void $q$ islands of open intervals, so that there are now $pq\
$number of total islands $\mathbf{I}_{2i},\ i=1,2,\ldots ,pq.\ $The value
assigned to each of these valued islands of infinitesimals are $%
v(X_{j})=\phi (X_{j})=\frac{j}{p^{2}},\ j=1,2,\ldots ,pq,\ $where $X_{j}\in $
$\mathbf{I}_{2j}.\ $Continuing this iteration, at the $n$ th level, the
 (secondary) scale is $\delta =\frac{1}{r^{n}}$ and the number of open intervals $\mathbf{%
I}_{nj}$ of infinitesimals are now $q(1+p+p^{2}+\cdots +p^{n})=N$ $($say$)$
with corresponding values%
\begin{equation}
v(X_{j})=\phi (X_{j})=\frac{j}{p^{n}},\ j=1,2,\ldots ,N\
\end{equation}%
where $X_{j}\in $ $\mathbf{I}_{nj}.\ $Thus $v$ and hence the Cantor function
$\phi $ is defined on the \textquotedblleft inverted Cantor
set\textquotedblright\ $\tilde{C}$ = $\underset{n}{\cap }\ \underset{j}{\cup
}$ $\mathbf{I}_{nj}$ and is extended to $\phi :$ $I$ $\rightarrow $\textbf{\
}$I$\textbf{\ \ }by\textbf{\ }continuity following equations like equation $%
(3)$. We note that the absolute value $\Vert .\parallel $ awarded to each
block of the Cantor intervals $F_{nk}$ are%
\begin{equation}
\parallel F_{nk}\parallel =r^{-ns}
\end{equation}%
for each $k=1,2,\ldots ,p^{n}$ where $C=$ $\underset{n}{\cap }\ \underset{k}{%
\cup }$ ${F}_{nk}$ and so $s=\frac{\log p}{\log r},$ since the valued
set of infinitesimals induces fine structures to an element in $F_{nk}$ viz.
for a $y$ $\in $ $F_{nk},$ we now have the infinitesimal neighbours $Y_{\pm
}^{j}=y.\ y^{\mp jp^{-n}},j=1,2,\ldots ,N.$

Clearly, the absolute value in equation $(20)$ corresponds to the minimum of
$v(x)$ at the $n$th iteration. Thus the valuations defined as the
associated Cantor function leads to a valued measure on $C$ that equals the
corresponding Hausdorff measure with $s=\frac{\log p}{\log r}.$

Let us now recall that the solutions of $\phi ^{\prime }(x)=0$ in a
non-archimedean space are locally constant functions \cite{na9}. To show that Cantor
function $\phi :$ $I$ $\rightarrow $\textbf{\ }$I$ is a locally constant
function\textbf{, }let us recall that the Cantor set $C$ is constructed
recursively as$\ C=$ $\underset{n}{\cap} \ \underset{k}{\cup }F_{nk}.$ The set $I\ ,\ $on
the other hand, is written as
$I=$ $\underset{n}{\cap }\ [\ (\overset{p^{n}}{\underset{k=1}{\cup }}$ $\tilde{F%
}_{nk})\ \cup \ ($ $\overset{N}{\underset{j=1}{\cup }}\ $ $\mathbf{I}_{nj})\ ],$
the open interval $\tilde{F}_{nk}$ being $F_{nk}$ with end points removed
(recall that $\mathbf{I}_{nj}$ are closed in the ultrametric topology). By definition $v(\mathbf{I}%
_{nj})=a_{nj}$ a constant for each $n$ and $j$. We set $v($ $\tilde{F}%
_{nk})=0$ as $n$ $\rightarrow \infty .$ This equality is to be understood in
the following sense. At an infinitesimal scale $\varepsilon _{0}\rightarrow
0^{+}$ the zero value of $\tilde{F}_{nk}$ becomes finitely valued
recursively for each $n\ $since a Cantor point $x\in $ $C$ is replaced by a
copy of the (inverted) Cantor set $\tilde{C}$ with finite number of closed
intervals like $\mathbf{I}_{nj}$. The derivatives of $\phi $ vanishes not only for
each $n$ and $j$ but {\em even as} $n$ $\rightarrow \infty $ (and $\varepsilon
\rightarrow 0,\ $for each arbitrarily small but fixed $\varepsilon _{0})$. 
Thus, the equality $\phi ^{\prime }(x)=0$ on  $I/C$, in the ordinary sense, 
gets extended  to every $x\in C$ when the Cantor set is reinterpreted as a 
 nonarchimedean space. The removal of the usual derivative discontinuities is 
 also explained dynamically as due to the fact that the approach to an actual Cantor 
 set point $x$ is accomplished in the nonarchimedean setting by inversion. That is to say, as a variable $X\in I$ approaches $x\in C$, the usual linear shift in $I$ is replaced by  
infinitesimal hoppings between two nieghbouring elements of the form $X_+/x\propto x/X_-$.

The variability of the locally constant function $\phi: I\rightarrow I $ may, however, be
captured in the usual topology as follows. \newline
Indeed, we show that
\begin{equation}\label{ord}
\frac{d\phi }{dx}=0
\end{equation}%
for finite values of $x\in I$ is transformed into
\begin{equation}\label{nonord}
\frac{d\phi }{dv(\tilde{x})}=-O(1)\phi
\end{equation}%
for an infinitesimal $\tilde{x}$ satisfying $\frac{x}{\varepsilon }=\lambda \
\frac{\varepsilon }{\tilde{x}}=\varepsilon ^{-v(\tilde{x})},\ 0<\tilde{x}%
<\varepsilon \leq x,\ x\rightarrow 0^{+},\ x\in I, \ \lambda >0,\ $  when one interprets $0$ in relation to the scale $\varepsilon $ as $O(\ \delta= \frac{\varepsilon ^{2}}{x%
}\log \varepsilon ^{-1}\ )$.   However, this follows once one notes that eq(\ref{ord}) means, in the ordinary sense, $d\phi=0=O(\delta), \ dx\neq 0$, for a finite $x\in I$. But, as $x\rightarrow \varepsilon$, that is, as $dx\rightarrow 0=O(\delta)$, the ordinary variable $x$ is replaced by the ultrametric extension $x=\varepsilon.\varepsilon^{-v(\tilde x)}$ so that $d\log x=dv(\tilde x)\log\varepsilon^{-1}=O(\delta)$. On the other hand, the constant function $\phi$ (eq(\ref{ord})), now, in the presence of smaller scale infinitesimals, has the form $\phi=\phi_0\varepsilon ^{k_0v(\tilde x)}$ for a real constant $k_0$. Eq(\ref{nonord}) thus follows. The
variability of $\phi (x)$ in the usual topology is thus explained as an
effect of the relative infinitesimals which are {\em insignificant} relative to
the finite scale of  $x\in C,$ but attain a dominant status in the
appropriate logarithmic variable $v(\tilde{x})=\log _{\varepsilon ^{-1}}\frac{%
\varepsilon }{\tilde{x}}.$ It is also of interest to compare the present
case with computation. In the ordinary framework, the scale $\varepsilon $
stands for the level of accuracy in a computational problem. The
infinitesimals in $(0,\varepsilon )$ are \textquotedblleft
valueless\textquotedblright\ in the sense that these have no effect on the
actual computation. The open interval  $(0,\varepsilon )$ is thus effectively 
indentified with $\{0\}$. In the present framework, the zero element $0$ is,
however, identified with a smaller interval of the form $(0,\delta )$ where $%
\delta =\eta \varepsilon \log \varepsilon ^{-1}$ and $0<\eta \lesssim 1.$
The valued infinitesimals in the interval $(\delta ,\varepsilon )$ are already shown
to have significant influence on the structure of the Cantor set. The
variability of $\phi (x)$ as given by equation $(22)$ is revealed, on the other hand,  in relation to an infinitesimal variable lying in $(0,\delta ).$

Finally, we verify the emergence of equation (22) from the classical Cantor
function equation $(2)$ and $(3)$ viz. : (we choose $j=0$ for simplicity)

\begin{equation}
\phi (\beta _{k})-\phi (\alpha _{k})=\frac{1}{p^{k}}\text{ and }\beta
_{k}-\alpha _{k}=\frac{1}{r^{k}}
\end{equation}%
We have
\begin{equation}
\phi (\beta _{k})-\phi (\alpha _{k})=\left( \frac{r}{p}\right) ^{k}(\text{ }%
\beta _{k}-\alpha _{k})
\end{equation}%
Let $\phi (\beta _{k})=\tilde{\phi}_{+},$ $\ \phi (\alpha _{k})=\tilde{\phi}%
_{-},$ $\ \beta _{k}=x_{+,}$ $\ \alpha _{k}=x_{-}.$ Suppose also that $%
r^{k}(x_{+}-x)\rightarrow $ $k\log \sigma _{+},$ $r^{k}(x-x_{-})\rightarrow $
$k\log \sigma _{-}$ , $p^{k}(\tilde{\phi}-\tilde{\phi}_{-})\rightarrow k\log
\phi _{-}^{\prime }$ \ and $p^{k}(\tilde{\phi}_{+}-\tilde{\phi})\rightarrow
k\log \phi _{+}^{\prime }$  as $k\rightarrow \infty .$

Equation $(24)$ becomes%
\begin{equation}
\log \phi _{+}^{\prime }+\log \phi _{-}^{\prime }=\log \sigma _{+}+\log
\sigma _{-}
\end{equation}%
which leads to
\begin{equation}
\frac{\log \phi _{+}^{\prime }}{\log \sigma _{+}}=\frac{\log \phi
_{-}^{\prime }}{\log \sigma _{-}}=\frac{\log \phi _{+}^{\prime }+\log \phi
_{-}^{\prime }}{\log \sigma _{+}+\log \sigma _{-}}=1
\end{equation}%
Equation $(26)$ is essentially the left and right brunches of equation $(21)$ at $x$ $%
\in $ $C$, in appropriate logarithmic variables,  where the multiplicative neighbours of $x$, in the present derivation, is given by the limiting form of the Cantor function defined by%
\begin{equation}
\phi _{+}^{\prime }=\sigma ^{1+i},\ \phi _{-}^{\prime }=\sigma ^{-(1+i)},\
i\geq 0
\end{equation}%
which follows from the inequality $\frac{\alpha +\gamma }{\beta +\delta }%
\leq \max (\frac{\alpha }{\beta },\frac{\gamma }{\delta }),$ $\alpha ,\gamma
\geq 0,$ $\beta ,\delta >0$ and equation $(25)$ so that

\begin{equation}
\left( \frac{\log \phi _{+}^{\prime }}{\log \sigma _{+}},\ \frac{\log \phi
_{-}^{\prime }}{\log \sigma _{-}}\right) \geq 1.
\end{equation}%
Setting $\sigma ^{-1}\phi _{+}^{\prime }=\{\frac{X_{+}}{x}\}^{i},\ $ $\sigma
\phi _{-}^{\prime }=\{\frac{X_{-}}{x}\}^{i}$ and $\sigma =x^{-v(\tilde{x})}$
the multiplicative neighbours of $x$ are obtained as
\begin{equation}
X_{\pm }=x.\ x^{\mp v(\tilde{x})}
\end{equation}%
The Cantor function $\phi (\tilde{x})$ over the infinitesimals $\tilde{x}$
is thus given by
\begin{equation}
\phi (\tilde{x})=\log _{ x^{-1}}\frac{X (\tilde{x})}{x}=v(\tilde{x})
\end{equation}%
thereby retrieving the variability of $\phi $ relative to $v$ trivially viz
: $d\phi =dv.$ 

We note that this again explains explicitly the removal of derivative discontinuities as encoded in eq(24) in the present formalism. The divergence of either the left or right derivative at an $x\in C$, that arises due to the divergence of $(r/p)^k, \ k\rightarrow \infty$, is smoothed out in the logarithmic variables that replace the ordinary limiting variables as in eqns (25) and (26), which, in fact, correspond to eq(22).    
We conclude that the multiplicative non-archimedean structure given by $(29)$
induces a smoothening effect on the discontinuity of $\phi ^{\prime }(x)$ in
the usual topology.

\bigskip

\end{document}